\documentclass [11pt]{article}

\oddsidemargin  0pt     
\evensidemargin 0pt     
\marginparwidth 40pt    
\marginparsep 10pt      
\topmargin 0pt          
\headsep 10pt           
\textheight 8.8in       
\textwidth 6.6in        

\usepackage{hyperref}
\usepackage{amsmath, amssymb, amsfonts, amsthm, enumerate}
\date{}
\newtheorem{theo}{Theorem}[section]

\newtheorem{lemma}[theo]{Lemma}

\theoremstyle{definition}

\newcommand{\mb}[1]{\mathbb{#1}}

\newcommand{\bangle}[1]{\left\langle #1 \right\rangle}

\newcommand{\sub}{\subseteq}

\newcommand{\Lra}{\Leftrightarrow}

\newcommand{\bB}{\beta}

\newcommand{\lL}{\lambda}

\def\qed{\hfill $\Box$}

\title{\vspace{-1.1cm} Bounds for spherical codes}

\author{Peter Keevash
\thanks{Mathematical Institute, University of Oxford, Oxford, UK. 
{\tt keevash@maths.ox.ac.uk}.
Research supported in part by ERC Consolidator Grant 647678.}
\and Benny Sudakov
\thanks{Department of Mathematics, ETH, 8092 Zurich, Switzerland. {\tt benjamin.sudakov@math.ethz.ch}. 
Research supported in part by SNSF grant 200021-149111.}
}

\begin{document}

\maketitle

\begin{abstract}
A set $C$ of unit vectors in $\mb{R}^d$ is called an $L$-spherical code 
if $x \cdot y \in L$ for any distinct $x,y$ in $C$.
Spherical codes have been extensively studied 
since their introduction in the 1970's by Delsarte, Goethals and Seidel.
In this note we prove a conjecture of Bukh on the maximum size of spherical codes. 
In particular, we show that for any set of $k$ fixed angles,
one can choose at most $O(d^k)$ lines in $\mb{R}^d$
such that any pair of them forms one of these angles.
\end{abstract}

\section{Introduction}
A set of lines in $\mb{R}^d$ is called equiangular
if the angles between any two of them are the same.
The problem of estimating the size of the maximum family of equiangular lines 
has had a long history since being posed by van Lint and Seidel \cite{vS} in 1966. 
Soon after that, Delsarte, Goethals and Seidel \cite{DGS2} 
showed that for any set of $k$ angles, 
one can choose at most $O(d^{2k})$ lines in $\mb{R}^d$ 
such that every pair of them forms one of these angles.
By choosing a unit direction vector on every line, 
the problem of lines with few angles has the following equivalent formulation. 
Given a set $L=\{a_1, \ldots, a_k\} \sub [-1,1]$.
find the largest set $C$ of unit vectors in $\mb{R}^d$ 
such that $x \cdot y \in L$ for any distinct $x, y \in C$. 
(Here $x \cdot y =\sum_i x_iy_i$ is the standard inner product.) 
Hence the problem of lines with few angles is a special case 
of a more general question which we will discuss next.

Suppose $C$ is a set of unit vectors in $\mb{R}^d$ and $L \sub [-1,1]$.
We say $C$ is an $L$-spherical code if $x \cdot y \in L$ for any distinct $x,y$ in $C$.
We will prove the following theorem on the maximum size of certain spherical codes, 
which was conjectured by Bukh \cite[Conjecture 9]{B1}.

\begin{theo} \label{main}
For any $k \ge 0$ there is a function $f_k: (0,1) \to \mb{R}$ such that
if $0<\bB<1$, $A \sub \mb{R}$ with $|A|=k$ and $C$ is an $L$-spherical code 
in $\mb{R}^d$ with $L =  [-1,-\bB] \cup A$ then $|C| \le f_k(\bB) d^k$.
\end{theo}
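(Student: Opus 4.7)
The proof will proceed by induction on $k$. For the base case $k=0$, $L=[-1,-\beta]$ forces every pair of distinct vectors in $C$ to have inner product at most $-\beta$, and the standard averaging $0\le \|\sum_{v\in C} v\|^2 = |C| + \sum_{v\ne w} v\cdot w \le |C|(1-\beta(|C|-1))$ yields $|C|\le 1+1/\beta$, so we may take $f_0(\beta) = 1+1/\beta$.

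For the inductive step I plan to use the polynomial method. To each $v \in C$ I associate $F_v(x) = \prod_{a\in A}(v\cdot x - a)$, a polynomial of degree $k$ in $x$ lying in the space $V_k$ of polynomials of degree at most $k$ on $S^{d-1}$, of dimension $O(d^k)$. After reducing to the case $1\notin A$, we have $F_v(v) = \prod_{a\in A}(1-a)\ne 0$, $F_v(w) = 0$ whenever $v\cdot w\in A$, and $|F_v(w)| \le 2^k$ whenever $v\cdot w\in [-1,-\beta]$. If the functions $\{F_v\}_{v\in C}$ were linearly independent as elements of $V_k$, then $|C|\le \dim V_k = O(d^k)$ would follow immediately; the obstruction is the ``negative graph'' $G$ on $C$ whose edges are the pairs with $v\cdot w\in[-1,-\beta]$.

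To handle $G$, I plan a secondary induction on the dimension $d$. Fixing any $v_0\in C$, I partition $C\setminus\{v_0\}$ according to the value of $v_0\cdot w$. For each $a\in A$, the vectors $w$ with $v_0\cdot w = a$ project via $\tilde w := (w-av_0)/\sqrt{1-a^2}$ to a spherical code in $\mathbb{R}^{d-1}$ whose inner products lie in a set of the form $[-1,-\beta'_a]\cup A'_a$, with $|A'_a|=k$ and $\beta'_a = (\beta+a^2)/(1-a^2)\ge\beta$, so by the inductive hypothesis on $d$ each class has at most $f_k(\beta'_a)(d-1)^k$ vectors. The vectors $w$ with $v_0\cdot w\in[-1,-\beta]$ are handled by pigeonholing the interval into $K=K(\beta)$ short sub-intervals and applying the same projection-plus-induction to each piece, for a total of at most $K\cdot f_k(\beta'')(d-1)^k$ such vectors with $\beta''\ge \beta/(1-\beta)$. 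Combining everything gives a recursion of the shape $f_k(\beta)d^k \gtrsim (k+K(\beta)) f_k(\beta')(d-1)^k$ with $\beta'>\beta$; this closes because the drift $\beta\mapsto\beta/(1-\beta)$ strictly increases $\beta$ and reaches, in finitely many steps, a regime where a different argument applies.

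The main obstacle is the quantitative closing of this recursion. The projection step does not reduce $k$, so all of the improvement must come from the dimension drop $d\to d-1$ combined with the drift in $\beta$. The most delicate point is the degenerate regime where $\beta$ is close to or exceeds $1/2$: there the image of the projected interval collapses towards a single point, turning the projected problem into a $(k{+}1)$-distance set on which the classical Delsarte--Goethals--Seidel bound would give only $O(d^{k+1})$. To avoid this loss one must instead exploit the near-antipodal (``lines'') structure forced by having inner products close to $-1$, passing to a lines code in $\mathbb{R}^{d-1}$ with $k$ angles where a Bukh-type argument recovers the desired $O(d^k)$ bound. Tracking the interplay between the drift $\beta\to\beta'$, the pigeonhole constant $K(\beta)$, and the lines-based argument at degeneracy is what I expect to be the bulk of the technical work.
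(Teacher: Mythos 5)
Your base case and your identification of the ``negative graph'' as the sole obstruction to the linear-algebra (Delsarte--Goethals--Seidel) bound are both correct and match the paper's starting point. But the engine of your inductive step --- the secondary induction on $d$ with pigeonholing of the negative interval --- has two gaps that I do not see how to repair within your framework. First, when you split $[-1,-\beta]$ into $K$ short subintervals, the vectors $w$ in one class satisfy $v_0\cdot w \approx c$ only approximately, so after projecting off $v_0$ the $k$ special inner products $a_i$ do not go to $k$ exact values $(a_i-c^2)/(1-c^2)$ but smear into $k$ short \emph{intervals}. The resulting code is not an $L'$-code with $|A'|=k$, so neither your inductive hypothesis nor the $\binom{d+k}{k}$ bound applies to it; making this rigorous would require proving a strictly stronger theorem in which $A$ is a union of small intervals. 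Second, even for the exact classes $\{w: v_0\cdot w=a\}$, the drift $\beta\mapsto(\beta+a^2)/(1-a^2)$ equals $\beta$ when $a=0$ and is only $\beta+O(a^2)$ when $a$ is small, so the number of recursion levels before you ``reach a regime where a different argument applies'' is at least of order $1/\min_a a^2$ --- it depends on $A$, and is infinite if $0\in A$. Since each level multiplies your count by $k+K(\beta)$, you end up with a bound depending on $A$, whereas the theorem requires $f_k(\beta)$ to be uniform over all $A$ with $|A|=k$.

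The paper's proof is organised precisely to avoid these two problems. It inducts on $k$, not on $d$. Small values in $A$ are never a source of non-termination: if $a_k<\beta^2/2$ the negative graph has bounded maximum degree (two negative neighbours of a common vertex project to inner product $\le a_k-\beta^2<-\beta^2/2$), so a large independent set is an honest $A$-code and Lemma~\ref{alg} finishes; if some $a_{\ell-1}<a_\ell^2/2$, projecting from one vertex through a single exact colour class pushes $a_1,\dots,a_{\ell-1}$ \emph{into} the negative interval, reducing $k$. Exactness of the remaining projections is secured not by pigeonholing an interval but by a Ramsey argument (Lemma~\ref{ramsey}) that produces a set $T$ of $t=\lceil 1/\beta'\rceil$ vectors with a single pairwise inner product $a_r$, onto whose orthogonal complement one projects all at once via Lemma~\ref{project+}; this drives $a_r$ below the threshold of the previous cases in one step. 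I would suggest replacing your dimension recursion with an induction on $k$ and finding a mechanism (such as the clique-projection above) that makes some $a_i$ small relative to the others, rather than trying to make $\beta$ large.
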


In particular, for any set of $k$ fixed angles,
one can choose at most $O(d^k)$ lines in $\mb{R}^d$
such that any pair of them forms one of these angles.
This substantially improves the above-mentioned bound 
of Delsarte, Goethals and Seidel \cite{DGS2}, 
in the case when the angles are fixed, i.e.\ do not depend on the dimension $d$.
The case $k=1$ was proved by Bukh \cite[Theorem 1]{B1}, 
who gave the first linear bound for the equiangular lines problem.
One should note that the assumption that the angles are fixed is important. 
Otherwise, for example when $k=1$, the linear upper bound is no longer valid,
as there are constructions of quadratically many equiangular lines in $\mb{R}^d$ 
(see \cite{D, GKMS, JW}).

\section{Lemmas}
In this section we present several lemmas which we will use in the proof of our main theorem.
We start by recalling some well-known results.
First we need the following bound on $L$-spherical codes, first proved in slightly stronger form by Delsarte, Goethals and 
Seidel \cite{DGS}. At around the same time, Koornwinder \cite{Kor} gave a short elegant proof using linear algebra (see also \cite[Lemma 10]{B2}).

\begin{lemma} \label{alg}
If $L \sub \mb{R}$ with $|L|=k$ and $C$ is an $L$-spherical code in $\mb{R}^d$
then $|C| \le \tbinom{d+k}{k}$.
\end{lemma}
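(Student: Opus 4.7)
The plan is to prove Lemma \ref{alg} by the classical polynomial (annihilator) method, in the form popularized by Koornwinder. The key idea is to attach to each codeword $c \in C$ a polynomial that vanishes on the rest of $C$ but not at $c$ itself, forcing linear independence inside a space whose dimension we can bound.

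First I would make the harmless reduction that $1 \notin L$: for distinct unit vectors $c,c' \in C$ we have $c \cdot c' < 1$, so replacing $L$ by $L \setminus \{1\}$ (if necessary) leaves $C$ an $L$-spherical code and only decreases $|L|$, hence only strengthens the bound to be proved. With this in hand I would, for each $c \in C$, define the polynomial
\[
F_c(x) \;=\; \prod_{a \in L} (x \cdot c - a)
\]
in the variables $x = (x_1,\ldots,x_d)$. This is a polynomial of total degree at most $k = |L|$. For any $c' \in C$ with $c' \neq c$ we have $c \cdot c' \in L$, so the corresponding factor of $F_c(c')$ vanishes, giving $F_c(c') = 0$. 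On the other hand $F_c(c) = \prod_{a \in L}(1 - a) \neq 0$ because $1 \notin L$ and each $a \in [-1,1]$ with $a \neq 1$.

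Next I would deduce linear independence of $\{F_c : c \in C\}$ as elements of the real vector space $P_k$ of polynomials in $x_1,\ldots,x_d$ of total degree at most $k$. Indeed, if $\sum_{c \in C}\lambda_c F_c \equiv 0$, then evaluating at any $c_0 \in C$ yields $\lambda_{c_0} F_{c_0}(c_0) = 0$, so $\lambda_{c_0} = 0$. Consequently $|C| \le \dim P_k = \binom{d+k}{k}$, which is the bound we want. The only nontrivial conceptual step is producing the right polynomials $F_c$; everything else is bookkeeping, and I do not anticipate a genuine obstacle. (A sharper bound making use of the sphere relation $\|x\|^2 = 1$ to pass to a smaller quotient space is also available and is what gives the slightly stronger Delsarte–Goethals–Seidel form, but for the statement $|C| \le \binom{d+k}{k}$ this refinement is not needed.)
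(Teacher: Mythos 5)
Your proof is correct, and it is precisely the Koornwinder-style polynomial (linear independence) argument that the paper cites in place of a written proof: the annihilating polynomials $F_c(x)=\prod_{a\in L}(x\cdot c-a)$ of degree at most $k$, the reduction to $1\notin L$ so that $F_c(c)\ne 0$, and the dimension count $\dim P_k=\tbinom{d+k}{k}$ are all exactly as in the standard reference. No gaps.
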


Next we need a well-known variant of Ramsey's theorem, whose short proof we include for the convenience of the reader.
Let $K_n$ denote the complete graph on $n$ vertices. Given an edge-colouring of $K_n$, we call an ordered pair $(X,Y)$ of 
disjoint subsets of vertices monochromatic if all edges in $X \cup Y$ incident to a vertex in $X$ have the same colour.

\begin{lemma} \label{ramsey} 
Let $k, t, m, n$ be non-negative integers satisfying $n>k^{kt}m$ 
and let $f:E(K_n) \to [k]$ be an edge $k$-colouring of $K_n$.
Then there is a monochromatic pair $(X,Y)$ such that $|X|=t$ and $|Y|=m$.
\end{lemma}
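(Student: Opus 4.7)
The plan is to use a standard iterated pigeonhole (greedy ``monochromatic neighbour'' argument), carried out for $kt$ steps and then an outer pigeonhole on colours.

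Concretely, I will build vertices $v_1, v_2, \dots, v_{kt}$, colours $c_1, c_2, \dots, c_{kt} \in [k]$, and a nested sequence of ``candidate'' sets $V = N_0 \supsetneq N_1 \supsetneq \cdots \supsetneq N_{kt}$, with $v_i \in N_{i-1}$ and $N_i \sub N_{i-1} \sm \{v_i\}$, chosen so that for every $i$, all edges from $v_i$ into $\{v_{i+1}, \dots, v_{kt}\} \cup N_{kt}$ have colour $c_i$. At step $i$, after picking $v_i \in N_{i-1}$ arbitrarily, I look at the $|N_{i-1}|-1$ edges from $v_i$ into $N_{i-1} \sm \{v_i\}$; by pigeonhole on the $k$ colours, some colour $c_i$ appears at least $(|N_{i-1}|-1)/k$ times, and I let $N_i$ be the set of vertices joined to $v_i$ by edges of that colour.

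This gives the recursion $|N_i| \ge (|N_{i-1}|-1)/k$, hence
\[
|N_{kt}| \;\ge\; \frac{n}{k^{kt}} \;-\; \sum_{j=1}^{kt} \frac{1}{k^j} \;>\; \frac{n}{k^{kt}} - 1.
\]
Since the hypothesis $n > k^{kt} m$ gives $n \ge k^{kt}m + 1$, we obtain $|N_{kt}| > m - 1$, so $|N_{kt}| \ge m$ (using that $|N_{kt}|$ is an integer). Then I apply the outer pigeonhole: among the $kt$ colours $c_1, \dots, c_{kt}$, some single colour $c \in [k]$ occurs at least $t$ times, say at indices $i_1 < \cdots < i_t$. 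Set $X = \{v_{i_1}, \dots, v_{i_t}\}$ and let $Y$ be any $m$-subset of $N_{kt}$.

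To verify monochromaticity, pick any $v \in X$, say $v = v_{i_j}$. By construction every edge from $v_{i_j}$ to $\{v_{i_j+1}, \dots, v_{kt}\} \cup N_{kt}$ has colour $c_{i_j} = c$, and $X \sm \{v_{i_j}\} \sub \{v_{i_j+1}, \dots, v_{kt}\}$ while $Y \sub N_{kt}$, so all edges incident to $v$ within $X \cup Y$ have colour $c$. I do not anticipate a real obstacle here; the only delicate point is making the arithmetic tight enough that the hypothesis $n > k^{kt}m$ (rather than something like $n > k^{kt}(m+1)$) suffices, which is why I track the geometric-series error term $\sum 1/k^j < 1$ and use integrality at the end.
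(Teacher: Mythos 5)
Your proof is correct and follows essentially the same route as the paper's: greedily pick $kt$ vertices, each time passing to a majority-colour neighbourhood within the current candidate set, and then pigeonhole on the $kt$ recorded colours to extract $t$ indices of a common colour. The only (cosmetic) difference is in the bookkeeping — you carry the loss as a geometric-series error term $\sum_j k^{-j}<1$ and invoke integrality once at the end, whereas the paper rounds up with a ceiling at each step; both yield $|N_{kt}|\ge m$ under the hypothesis $n>k^{kt}m$ (for $k\ge 2$, which is the only case that matters in the application).
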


\begin{proof}
Consider a family of $kt$ vertices $v_1,\dots,v_{kt}$ and sets $Y_1, \ldots, Y_{kt}$ constructed as follows.
Fix $v_1$ arbitrarily and let $c(1) \in [k]$ be a majority colour among the edges $(v_1,u)$. Set 
$Y_1=\{u: f(v_1,u)=c(1)\}$. By the pigeonhole principle, $|Y_1| \geq \lceil(n-1)/k\rceil \geq k^{kt-1}m$.
In general, we fix any $v_{i+1}$ in $Y_i$, let $c(i+1) \in [k]$ be a majority colour 
among the edges $(v_{j+1},u)$ with $u \in Y_i$, and let $Y_{i+1}=\{u \in Y_i: f(v_{i+1},u)=c(i+1)\}$. 
Then  $|Y_{i+1}| \geq \lceil(|Y_i|-1)/k\rceil \geq k^{kt-i-1}m$, and for every 
$1 \leq j \leq i$ the edges from $v_j$ to all vertices in $Y_{i+1}$ have colour $c(j)$.   
Since we  have only $k$ colours, there is a colour $c \in [k]$ and $S \sub [kt]$ with $|S|=t$
so that $c(j) = c$ for all $j \in S$. Then $X=\{v_j: j \in S\}$ and $Y=Y_{kt}$ form 
a monochromatic pair of colour $c$, satisfying the assertion of the lemma.
\end{proof}

\medskip

The following lemma is also well-known.

\begin{lemma} \label{neg}
If $L=[-1,-\bB]$ and $C$ is a $L$-spherical code then $|C| \le \bB^{-1}+1$. 
\end{lemma}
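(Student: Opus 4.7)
The plan is to use a standard positivity argument. Let $C = \{x_1, \ldots, x_n\}$ be the $L$-spherical code, so each $x_i$ is a unit vector and $x_i \cdot x_j \le -\beta$ for every $i \neq j$.

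The key step is to consider the squared norm of the sum $\sum_{i=1}^n x_i$, which is of course non-negative. Expanding gives
\[
0 \le \left\| \sum_{i=1}^n x_i \right\|^2 = \sum_{i=1}^n \|x_i\|^2 + \sum_{i \neq j} x_i \cdot x_j \le n + n(n-1)(-\beta),
\]
using that each $\|x_i\|^2 = 1$ and each off-diagonal inner product is at most $-\beta$. Rearranging yields $\beta(n-1) \le 1$, i.e., $n \le \beta^{-1} + 1$, as required.

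The only potential subtlety is checking that the inequality $x_i \cdot x_j \le -\beta$ indeed holds for all distinct pairs (which follows directly from $L = [-1,-\beta]$), and noting no calculations beyond the single Cauchy–Schwarz–style inequality $\|v\|^2 \ge 0$ are required. There is really no obstacle here; this is a one-line averaging argument whose strength comes entirely from the fact that negative inner products between unit vectors cannot all simultaneously be too negative.
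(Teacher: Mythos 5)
Your proof is correct and is essentially identical to the paper's: both expand $\bigl\| \sum_{x \in C} x \bigr\|^2 \ge 0$, bound each off-diagonal inner product by $-\beta$, and rearrange to get $|C| \le \beta^{-1}+1$.
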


\begin{proof}
Let $v=\sum_{x \in C} x$. Then, by definition of $L$-spherical code,
$$ 0 \le \| v \|^2 = \sum_{x \in C} \|x\|^2 + \sum_{x\not =x' \in C}x \cdot x'\leq |C| - |C|(|C|-1) \bB=|C|\big(1-(|C|-1) \bB\big).$$
Therefore $1-(|C|-1) \bB \geq 0$, implying $|C| \le \bB^{-1}+1$.
\end{proof}

\medskip

We will also need the following simple corollary of 
Tur\'an's theorem, which can be obtained by greedily deleting 
vertices together with their neighbourhoods.

\begin{lemma} \label{turan}
Every graph on $n$ vertices with maximum degree $\Delta$ 
contains an independent set of size at least $\frac{n}{\Delta+1}$.
\end{lemma}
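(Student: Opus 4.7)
The plan is to follow exactly the hint in the statement: construct an independent set by a greedy peeling procedure. I would define a sequence of graphs $G_0 \sups G_1 \sups \dots$ with $G_0 = G$ on $n$ vertices, and at each step pick an arbitrary vertex $v_i$ in $G_i$, add it to a set $I$, and form $G_{i+1}$ by deleting $v_i$ together with all of its neighbours in $G_i$. The procedure terminates when $G_{i+1}$ is empty.

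The two things to check are independence of $I$ and a lower bound on $|I|$. For independence, note that when $v_i$ is chosen, every neighbour of $v_i$ in the current graph $G_i$ is immediately removed, so no subsequent $v_j$ with $j>i$ is adjacent to $v_i$ in $G_i$, hence not in $G$ either. For the size bound, the key observation is that at each step we delete at most $1 + \Delta$ vertices, since $v_i$ has degree at most $\Delta$ in $G \sups G_i$. Therefore the process runs for at least $\bcl{n/(\Delta+1)}$ rounds, so $|I| \ge n/(\Delta+1)$, as required.

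There is essentially no obstacle here: the only thing one has to be a little careful about is that the degree of $v_i$ in $G_i$ is bounded by its degree in $G$, which is in turn at most $\Delta$, so the "$1 + \Delta$ vertices removed per step" bound is legitimate even though $G_i$ is not the original graph. (One could alternatively quote Tur\'an's theorem and observe that a graph with maximum degree $\Delta$ has at most $n\Delta/2$ edges, hence independence number at least $n^2/(n + 2 \cdot n\Delta/2) = n/(\Delta+1)$ by Tur\'an's bound $\alpha(G) \ge n^2/(n + 2e(G))$, but the greedy argument is shorter and self-contained.)
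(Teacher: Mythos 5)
Your greedy peeling argument is correct and is exactly the approach the paper has in mind (it describes the lemma as obtained ``by greedily deleting vertices together with their neighbourhoods''). Both the independence check and the bound of at most $\Delta+1$ vertices removed per round are handled properly, so nothing further is needed.
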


\medskip

In the remainder of this section 
we will introduce our new tools for bounding spherical codes.
Suppose $x \in \mb{R}^d$ and $U$ is a subspace of $\mb{R}^d$.
We write $x_U$ for the projection of $x$ on $U$.
Let $U^\perp$ be the orthogonal complement of $U$.
Note that $x = x_U + x_{U^\perp}$. If $x_{U^\perp} \ne 0$
we write $p_U(x) = \| x_{U^\perp} \|^{-1} x_{U^\perp}$ for the normalized projection of 
$x$ on $U^\perp$. So $\|p_U(x)\|=1$. If $U=\bangle{Y}$ is spanned by the set of vectors $Y$ we also use $p_Y(x)$ to denote $p_U(x)$.

\begin{lemma} \label{project}
Suppose $\|x_1\|=\|x_2\|=\|y\|=1$ and each $x_i \cdot y = c_i$ with $|c_i|<1$.
Then each $p_y(x_i) = \frac{x_i-c_i y}{\sqrt{1-c_i^2}}$ and 
$p_y(x_1) \cdot p_y(x_2) = \frac{x_1 \cdot x_2 - c_1 c_2}{\sqrt{(1-c_1^2)(1-c_2^2)}}$.
\end{lemma}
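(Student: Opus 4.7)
The plan is to carry out a direct orthogonal decomposition along $y$ and then compute. Since $y$ is a unit vector, the one-dimensional subspace $U = \langle y \rangle$ has orthogonal projection $x \mapsto (x \cdot y)\, y$. Applied to $x_i$, this gives $x_{i,U} = c_i y$ and therefore $x_{i,U^\perp} = x_i - c_i y$.

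Next I would compute the norm of this orthogonal component by expanding the inner product:
\[
\|x_i - c_i y\|^2 = \|x_i\|^2 - 2 c_i (x_i \cdot y) + c_i^2 \|y\|^2 = 1 - 2c_i^2 + c_i^2 = 1 - c_i^2.
\]
Since $|c_i| < 1$, this quantity is strictly positive, so the square root is well-defined, and the definition of $p_y$ immediately yields $p_y(x_i) = (x_i - c_i y)/\sqrt{1-c_i^2}$, which is the first claim.

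For the second claim I would expand the inner product of the two normalized projections, using $y \cdot y = 1$ and $x_i \cdot y = c_i$:
\[
(x_1 - c_1 y) \cdot (x_2 - c_2 y) = x_1 \cdot x_2 - c_2(x_1 \cdot y) - c_1(x_2 \cdot y) + c_1 c_2 = x_1 \cdot x_2 - c_1 c_2.
\]
Dividing by the product of the norms $\sqrt{1-c_1^2}\sqrt{1-c_2^2}$ gives the stated formula.

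There is no real obstacle here; the lemma is a one-step computation recording the effect of normalized orthogonal projection away from a single unit vector, and will be used later as a bookkeeping device when iteratively projecting a spherical code away from a small collection of vectors.
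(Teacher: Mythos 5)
Your proposal is correct and follows exactly the same route as the paper: project onto $\langle y\rangle$ to get $x_i - c_i y$ as the orthogonal component, compute its squared norm as $1-c_i^2$, and expand the cross inner product to get $x_1\cdot x_2 - c_1c_2$. The paper's own proof is the same computation stated more tersely.
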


\begin{proof}
The projection of $x_i$ on $y$ is $c_i y$, so the projection of $x_i$ on $y^\perp$  is $x_i-c_i y$.
As $(x_i-c_i y) \cdot (x_i-c_i y) = 1-c_i^2$ and $(x_1-c_1 y) \cdot (x_2-c_2 y) = x_1 \cdot x_2 -c_1c_2$
the lemma follows. 
\end{proof}

\medskip

Given a subspace $U$ we can calculate $p_U(x)$ using 
the following version of the Gram-Schmidt algorithm.
Suppose that $\{y_1,\dots,y_k\}$ is a basis for $U$. Write $y_{k+1}=x$.
Define vectors $y^i_j$ by $y^0_j=y_j$ for $j \in [k+1]$
and $y^i_j=p_{y^{i-1}_i}(y^{i-1}_j)$ for $1 \le i < j \le k+1$. It is easy to check by induction that for every $j$ 
the vectors $y_1^0, y_2^1, \ldots, y_j^{j-1}$ are orthogonal. Also $y_j^{j-1}$ is a unit vector for $j>1$.
Therefore $p_U(x) = y^k_{k+1}$. 

\begin{lemma} \label{project+}
Suppose $X \cup Y$ is a set of unit vectors in $\mb{R}^d$
such that $x \cdot y = y \cdot y' = c$ with $|c|<1$ 
for all $x \in X$ and distinct $y,y'$ in $Y$. 
Let $U = \bangle{Y}$ and $k=|Y|$. 
Then for any $x,x'$ in $X$ we have 
$p_U(x) \cdot p_U(x') = g^c_k(x \cdot x')$, where 
\[g^c_k(a) := 1 - (1-c)^{-1} ( 1 - (c^{-1}+k)^{-1} )(1-a)
= (1-c)^{-1} [ a-c + (c^{-1}+k)^{-1} (1-a) ].\]
\end{lemma}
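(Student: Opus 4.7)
The natural approach is induction on $k=|Y|$, chipping away one vector of $Y$ at a time using Lemma \ref{project}. For the base case $k=1$, writing $Y=\{y\}$ and applying Lemma \ref{project} with $c_1=c_2=c$ gives $p_y(x) \cdot p_y(x') = (x \cdot x' - c^2)/(1-c^2)$, and a direct simplification of the definition shows $g^c_1(a) = (a-c^2)/(1-c^2)$, so the two agree.

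For the inductive step, pick any $y \in Y$ and project everything onto $y^\perp$. By Lemma \ref{project}, all inner products between vectors in $\tilde{X} := \{p_y(x) : x \in X\}$ and $\tilde{Y} := \{p_y(y') : y' \in Y \sm \{y\}\}$, as well as between distinct vectors in $\tilde{Y}$, equal the common value $c' := (c-c^2)/(1-c^2) = c/(1+c)$. Meanwhile the $X$-$X$ inner product transforms as $x \cdot x' \mapsto (x \cdot x' - c^2)/(1-c^2)$. Since $y$ is orthogonal to $\bangle{\tilde{Y}}$, the subspace $U = \bangle{Y}$ decomposes as $\bangle{y} \oplus \bangle{\tilde{Y}}$, so $p_U(x) = p_{\bangle{\tilde{Y}}}(\tilde{x})$ where $\tilde{x} = p_y(x)$. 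Applying the inductive hypothesis to the code $\tilde{X} \cup \tilde{Y}$ with parameters $k-1$ and $c'$ therefore yields
$$p_U(x) \cdot p_U(x') \;=\; g^{c'}_{k-1}\!\left( \frac{x \cdot x' - c^2}{1 - c^2} \right).$$

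All that remains is the algebraic identity $g^{c'}_{k-1}((a-c^2)/(1-c^2)) = g^c_k(a)$, which I expect to be the only real (though minor) obstacle. The cleanest route is to use the second form of $g^c_k$ given in the statement. The crucial observation is that $(c')^{-1} + (k-1) = c^{-1} + 1 + (k-1) = c^{-1} + k$, so the factor $(c^{-1}+k)^{-1}$ is preserved exactly; combined with $1 - c' = 1/(1+c)$, this produces the correct prefactor $(1-c)^{-1}$, and the $(a-c)$ and $(1-a)$ terms then match after a brief simplification.
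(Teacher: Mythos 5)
Your proof is correct and is essentially the paper's argument: both peel off one vector of $Y$ at a time via Lemma \ref{project}, tracking the recursion $c \mapsto c/(1+c)$ (equivalently $c^{-1} \mapsto c^{-1}+1$) for the common inner product. The only cosmetic difference is that you organize this as an induction on $k$ and verify the composition identity $g^{c'}_{k-1}\big((a-c^2)/(1-c^2)\big) = g^c_k(a)$ by direct algebra, whereas the paper runs the full Gram--Schmidt process in one pass and pins down the resulting affine map by evaluating it at $a=c$.
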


\vspace{0.1cm}
\noindent
{\bf Remark.} \, Note that $g^c_0(a) = a$, $g^c_k(c)=(c^{-1}+k)^{-1}$ and $g^c_k(a)$ is decreasing in $k$. Also
$g^c_k \to \frac{a-c}{1-c}$ when $k$ tends to infinity.

\medskip

\begin{proof}
We write $Y = \{y_1,\dots,y_k\}$, $y_{k+1}=x$, $y_{k+2}=x'$
and calculate $p_U(x) = y^k_{k+1}$ and $p_U(x') = y^k_{k+2}$
using the algorithm and notation introduced before the lemma.
It is easy to see that vectors in $Y$ are linearly independent, 
since the matrix of pairwise inner products of these vectors has full rank.
Let $c_i^{-1}=i+c^{-1}$. We show by induction for $0 \le i \le k$ 
that $y^i_j \cdot y^i_{j'} = c_i$ for all distinct $j,j' > i$,
with the possible exception of $\{j,j'\}=\{k+1,k+2\}$.
Indeed, this holds by hypothesis when $i=0$. 
When $0<i \leq k$, by induction $y^{i-1}_i\cdot y^{i-1}_j=y^{i-1}_i\cdot y^{i-1}_{j'}=c_{i-1}$. 
Therefore by Lemma \ref{project} 
\begin{equation}
\label{induction-step}
y^i_j \cdot y^i_{j'} = p_{y^{i-1}_i}(y^{i-1}_j) \cdot p_{y^{i-1}_i}(y^{i-1}_{j'})
= (1-c_{i-1}^2)^{-1} (y^{i-1}_j \cdot y^{i-1}_{j'} - c_{i-1}^2).
\end{equation}
If $\{j,j'\}\not =\{k+1,k+2\}$, then $y^{i-1}_j \cdot y^{i-1}_{j'}=c_{i-1}$ as well. The induction step follows, as
\[ (y^i_j \cdot y^i_{j'})^{-1} = (1-c_{i-1}^2)(c_{i-1} - c_{i-1}^2)^{-1} = 1+c_{i-1}^{-1} = i+c^{-1}=c_i^{-1} . \]
Writing $r_i = y^i_{k+1} \cdot y^i_{k+2} - 1$ we have $r_{i+1} = (1-c_i^2)^{-1} r_i$ by (\ref{induction-step}),
so \[p_U(x) \cdot p_U(x') = 1 + r_k = 1 - \lL (1 - x \cdot x'),\]
where $\lL = \prod_{i=0}^{k-1} (1-c_i^2)^{-1}$. To compute $\lambda$ consider the case $x \cdot x' = c$. 
Then by the above discussion 
$1 - \lL (1-c) = p_U(x) \cdot p_U(x') = c_k = (c^{-1}+k)^{-1}$,
so $\lL = (1-c)^{-1} ( 1 - (c^{-1}+k)^{-1} )$.
\end{proof}

\section{Proof of the main result}
In this section we prove Theorem \ref{main}. 
We argue by induction on $k$. The base case is $k=0$, when $L =  [-1,-\bB]$, 
and we can take $f_0(\bB) = \bB^{-1}+1$ by Lemma \ref{neg}. Henceforth we suppose $k>0$.
We can assume $d \geq d_0= (2k)^{2k\bB^{-1}}$. Indeed, if we can prove the theorem under this assumption, 
then for $d<d_0$ we can use the upper bound for $\mb{R}^{d_0}$ (since it contains $\mb{R}^d$). 
Then we can deduce the bound for the general case by 
multiplying $f_k(\bB)$ (obtained for the case $d \geq d_0$) 
by a factor $d_0^k=(2k)^{2k^2 \bB^{-1}}$.

Suppose $C = \{x_1,\dots,x_n\}$ is an $L$-spherical code in $\mb{R}^d$,
where $L =  [-1,-\bB] \cup \{a_1,\dots,a_k\}$, with $a_1<\dots<a_k$.
We define graphs $G_0,\dots,G_k$ on $[n]$
where $(i,j) \in G_\ell \Lra x_i \cdot x_j = a_\ell$ for $\ell \in [k]$
and $(i,j) \in G_0 \Lra x_i \cdot x_j \in [-1,-\bB]$.

Consider the case $a_k < \bB^2/2$.
We claim that $G_0$ has maximum degree $\Delta \leq 2\bB^{-2}+1$. 
Indeed, consider $y \in [n]$ and $J \sub [n]$
such that $(y,j) \in G_0$ for all $j \in J$.
For any $j,j'$ in $J$ we have $x_y\cdot x_j, x_y\cdot x_{j'} \leq -\bB$. 
Hence, by Lemma \ref{project} we have
$$p_{x_y}(x_j) \cdot p_{x_y}(x_{j'})
= \frac{x_j\cdot x_{j'}-(x_y\cdot x_j)(x_y\cdot x_{j'})}{\sqrt{1-(x_y\cdot x_j)^2}\sqrt{1-(x_y\cdot x_{j'})^2}} 
\le \frac{a_k - \bB^2}{{\sqrt{1-(x_y\cdot x_j)^2}\sqrt{1-(x_y\cdot x_{j'})^2}}} < -\bB^2/2\,.$$
Thus $|J| \le 2\bB^{-2}+1$ by Lemma \ref{neg}, as claimed.
By Lemma \ref{turan}, $G_0$ has an independent set $S$ of size $n/(2\bB^{-2}+2)$.
Then $\{x_j: j \in S\}$ is an $\{a_1,\dots,a_k\}$-spherical code,
so $|S| \le d^k+1 \leq 2d^k$ by Lemma \ref{alg}. Choosing $f_k(\bB) > 4\bB^{-2}+4$,
we see that the theorem holds in this case.
Henceforth we suppose $a_k \ge \bB^2/2$.

Next consider the case that there is $\ell\geq 2$ such that $a_{\ell-1} < a_\ell^2/2$.
Choosing the maximum such $\ell$ we have 
\begin{equation}
\label{case2}
a_\ell^2/2=2(a_\ell/2)^2 \geq 2(a_{\ell+1}/2)^4 \geq \ldots \geq 2(a_k/2)^{2^{k-\ell+1}} \ge \bB' := (\bB/2)^{2^k}.
\end{equation}
Note that by induction $\cup_{i=0}^{\ell-1} G_i$ contains no clique of order $f_{\ell-1}(\bB)d^{\ell-1}$,
so by Lemma \ref{turan} its complement has maximum degree at least $n'=n/(2f_{\ell-1}(\bB)d^{\ell-1})$.
Consider $y \in [n]$ and $J \sub [n]$ with $|J|=n'$
such that $(y,j) \notin \cup_{i=0}^{\ell-1} G_i$ for all $j \in J$. 
By the pigeonhole principle, there is a subset $J' \subset J$ of size at least
$|J'|\geq |J|/k$  and an index $\ell \leq s\leq k$ such that $(y,j) \in  G_s$ for all $j \in J'$.
For any $(j,j') \in \cup_{i=0}^{\ell-1} G_i[J']$, by Lemma \ref{project} we have
$$p_{x_y}(x_j) \cdot p_{x_y}(x_{j'}) =\frac{x_j\cdot x_{j'}-a_s^2}{1-a_s^2}\leq a_\ell^2/2 - a_\ell^2 
< -a_\ell^2/2 \le - \bB'.$$ 
Now $\{p_{x_y}(x_j): j \in J'\}$ is an $L'$-spherical code,
where $L' = [-1,-\bB'] \cup \{a'_\ell,\dots,a'_k\}$,
with $a'_i = \frac{a_i - a_s^2}{1-a_s^2}$ for $i \ge \ell$.
By induction hypothesis, we have $|J'| \le f_{k-\ell+1}(\bB') d^{k-\ell+1}$,
so choosing $f_k(\bB)>2kf_{\ell-1}(\bB)f_{k-\ell+1}(\bB')$ the theorem holds in this case.

Now suppose that there is no $\ell>1$ such that $a_{\ell-1} < a_\ell^2/2$.
We must have $a_1>0$. Let $t=\lceil 1/\bB' \rceil$.
We apply Lemma \ref{ramsey} to find an index $r$ and a disjoint pair of sets $(T,M)$ 
with $|T|=t$ and $|M|=m\geq  (k+1)^{-(k+1)t} n$, 
such that all vertices in $T$ are adjacent to each other 
and to all vertices in $M$ by edges of $G_r$. 
Note that $r>0$, as $G_0$ has no clique of size $t$ by Lemma \ref{neg}. 
For $j \in M$ we write $x'_j = p_T(x_j)$.
By Lemma \ref{project+}, for any $(j,j') \in G_i[M]$ with $i \ge 1$
we have $x'_j \cdot x'_{j'} = a'_i := g^{a_r}_t(a_i)$.
Also, if $(j,j') \in G_0[N]$ we have 
$x'_j \cdot x'_{j'} = g^{a_r}_t(x_j \cdot x_{j'}) \le g^{a_r}_t(-\bB) \le -\bB$. 
Thus $\{x'_j: j \in M\}$ is an $L'$-spherical code in $\mb{R}^{d-t}$,
where $L' = [-1,\bB] \cup \{a'_1,\dots,a'_k\}$.

We can assume $a'_k \ge \bB^2/2$, otherwise choosing
$f_k(\bB) > (k+1)^{(k+1)t} (4\bB^{-2}+4)$ we are done 
by the first case considered above. Since $a'_r=(a_r^{-1}+t)^{-1}<\bB'$,
the computation in (\ref{case2}) implies that
there is $\ell>1$ such that $a_{\ell-1} < a_\ell^2/2$.
Choosing $f_k(\bB) > (k+1)^{(k+1)t}2kf_{\ell-1}(\bB) f_{k-\ell+1}(\bB')$
we are done by the second case considered above. \qed

\section{Concluding remarks}
One can use our proof to derive an explicit bound for $f_k(\bB)$. Indeed, it can be easily shown 
that it is enough to take $f_k(\bB)$ to be $2^{\bB^{-2^{O(k^2)}}}$. We omit the details, 
as we believe that this bound is very far from optimal. Moreover, one cannot expect a bound better than exponential in $\bB^{-1}$
using our methods or those of Bukh \cite{B1}. On the other hand, we do not know any example ruling out the possibility
that $f_k(\bB)$ could be independent of $\bB$ if $k>0$ and $A$ is fixed (Bukh \cite{B1} also makes this remark for $k=1$). 
One place to look for an improvement is in the application of Ramsey's theorem,
as one would expect much better bounds for Ramsey-type questions for graphs defined by geometric constraints
(see \cite{CFPSS} and its references for examples of this phenomenon).

\end{document}